\def\quotation#1{``#1''}
\newcommand{\PP}{\mathbb{P}}
\newcommand{\EE}{{\mathbb{E}}}
\newcommand{\lle}{\, \, {\lesssim}\, \, }
\newcommand{\gge}{\, \, {\gtrsim}\, \, }
\newcommand{\eps}{\varepsilon}
\newcommand{\RR}{\mathbb{R}}
\newcommand{\NN}{\mathbb{N}}
\newcommand{\qv}[1]{\left< #1 \right>}
\newcommand{\jjsumN}{\sum_{j=0}^{N-1}}
\newcommand{\jjsumn}{\sum_{j=0}^{n-1}}
\newcommand{\sumin}{\sum_{i=1}^n}
\newtheorem{thm}{Theorem}[section]
\newtheorem{lem}{Lemma}[section]
\newtheorem*{assumption}{Assumption}
\theoremstyle{definition}
\begin{document}

\begin{frontmatter}

\title{\bf Minimax lower bounds for function estimation on graphs}

\author{Alisa Kirichenko and Harry van 
Zanten
}

\maketitle

\bigskip

\begin{abstract}
We study minimax lower bounds for function estimation problems on large graph when the target function is smoothly varying over the graph. We derive minimax rates in the context of regression and classification problems on graphs that satisfy an asymptotic shape assumption and with a smoothness condition on the target function, both 
formulated in terms of the graph Laplacian.

\end{abstract}

\end{frontmatter}

\numberwithin{equation}{section}

\numberwithin{equation}{section}

\section{Introduction}

In recent years there has been substantial interest in high-dimensional 
estimation and prediction problems on large graphs. These can in many cases be seen 
as high-dimensional or nonparametric regression or classification problems
in which the goal is to learn a ``smooth'' function on a given graph. 
Various methods have been proposed to deal with such problems, motivated 
by a variety of applications. 
Any sensible method employs some form of regularisation that takes 
the geometry of the graph into account. Examples of methods that have
been considered include penalised least squares regression using a Laplacian-based penalty
(e.g.\ \cite{ando2007, belkin2004, kolaczyk2009, smola2003, zhu2005}), penalisation using the total variation norm (e.g.\ \cite{ryan}) 
and Bayesian regularisation (e.g.\ \cite{jarno}, \cite{andrew}, \cite{me}).

There exist only a few papers that study theoretical aspects of the performance of 
nonparametric estimation procedures on graphs. Early references are \cite{belkin2004}, in which a theoretical analysis of a Tikhonov regularisation method 
is conducted in terms of algorithmic stability and
\cite{johnson2007effectiveness} and \cite{ando2007}, who consider sub-sampling schemes for estimating 
a function on a graph. 
More recently, convergence rates have been obtained by \cite{ryan} in the context of 
regression on a regular grid using total variation penalties and by 
\cite{me} for nonparametric Bayes procedures for regression and classification 
on more general graphs. 
The paper \cite{ryan} also establishes minimax lower bounds for regression problems on grids.

In this paper we derive new minimax results for regression and binary classification on graphs, 
exhibiting the best possible rates that can be attained uniformly over certain 
classes of ``smooth'' functions on graphs. We consider simple undirected graphs that satisfy an assumption on their ``asymptotic geometry'', formulated in terms of the 
graph Laplacian. This assumption, which is recalled in the next section, 
was introduced in \cite{me}. 
The geometry assumption attaches a parameter $r$ to a graph which essentially describes 
how the eigenvalues of its Laplacian behave.  It was illustrated in \cite{me}
that it  is satisfied for many graphs of interest. Theoretically it can be shown to 
hold for instance for regular grids and tori  of arbitrary dimensions. 
Moreover, for a given graph it can be verified empirically whether the assumption 
is reasonable and what the corresponding graph parameter $r$ is. 
 In \cite{me} this was done both for simulated ``small world graphs''  
and for real protein-protein interaction graphs.

The geometry parameter $r$ appears in the minimax 
lower bounds we derive. The other key ingredient is the regularity $\beta$ of 
the function that is being estimated, defined in a suitable manner. 
 We introduce a Sobolev-type smoothness condition on the target function using the 
 graph Laplacian again to quantify smoothness. 
The geometry of the underlying graph and the smoothness of the target function together 
determine the minimax rate through the geometry parameter $r$ and the smoothness parameter $\beta$. 
We have chosen our setup and normalisations in such a way that the optimal rates 
over balls of smooth functions that we obtain are of the usual form $n^{-\beta/(r+2\beta)}$. 
This shows that the geometry parameter $r$ can be interpreted as some 
kind of ``dimension'' of the graph. In the regular grid case it is indeed precisely the dimension 
of the grid, as shown in \cite{me}. However, the result holds for 
much more general graphs as well. In particular, the geometry parameter $r$ does not need 
to be an integer.

For the sake of completeness we give two-sided results, that is, we also 
exhibit estimators that achieve the lower bounds, showing that the bounds are tight.
However, these estimators are non-adaptive, in the sense that they depend on the
smoothness parameter $\beta$, which will typically not be accessible in realistic settings. 
More interestingly perhaps, the lower bounds match the upper bounds 
we obtained in \cite{me}. This shows that the nonparametric Bayes procedures we 
proposed in the latter paper are smoothness-adaptive and rate-optimal. 
We note however that the  procedure exhibited in \cite{me} that is adaptive on the whole
range of regularities $\beta> 0$ is only rate-optimal up to a logarithmic factor. 
It might be of interest to study the possibility of procedures that achieve exactly the correct rate.

In the next section we introduce the general framework. Specifically, we define the geometry condition on the graph and the smoothness condition on the target function. In Section \ref{main} we describe the regression and classification problems on a graph and present our results on 
the minimax rates for those problems. The mathematical proofs are given in Section \ref{proofs}. 

\section{Setting} \label{sec: setting}
Let $G$ be a connected simple undirected graph with vertices labelled $\{1,\dots,n\}$. 
Let $A$ be its adjacency matrix, i.\,e. $A_{ij}$ is $1$ or $0$ according 
to whether or not there is an edge between vertices $i$ and $j$. Let $D$ be the diagonal matrix 
with element $D_{ii}$ equal to the degree of vertex $i$. Let $L = D-A$ be the Laplacian of 
the graph. 

A function $f$ on the (vertices of the) graph is simply a function $f: \{1, \ldots, n\} \to \RR$. 
We measure distances and norms of functions using the norm $\|\cdot\|_n$ defined by 
$\|f\|^2_n = n^{-1} \sum_{i=1}^n f^2(i)$.
The corresponding inner product of two functions $f$ and $g$ is denoted by 
\[
\qv{f,g}_n = \frac1n \sum_{i=1}^n f(i)g(i).
\]
The Laplacian is nonnegative definite (\cite{thebook}). Hence
we can order the Laplacian eigenvalues
by magnitude and denote them by
\[
0 = \lambda_{n, 0} < \lambda_{n,1} \le \lambda_{n,2} \le \cdots \le \lambda_{n,n-1}.
\]
(The smallest one always equals $0$ and since the graph is connected, the second
one is positive, see for instance \cite{thebook}).
We fix a corresponding sequence of eigenfunctions $\psi_{i}$, orthonormal with respect to the inner product $\qv{\cdot, \cdot}_n$. 

We derive our results under an asymptotic geometry 
assumption on the graph, first introduced in \cite{me}, 
formulated in terms of the Laplacian eigenvalues.

\bigskip

\begin{assumption}
We say that the {\em geometry assumption is satisfied with parameter $ r \ge 1$} 
if there exist $i_0 \in \NN$, $\kappa \in (0,1]$ and $C_1, C_2 > 0$ such that 
for all $n$ large enough, 
\[
C_1\Big(\frac i n\Big)^{2/ r} \le \lambda_i \le C_2\Big(\frac i n\Big)^{2/ r}, 
 \qquad \text{for all $i \in \{i_0, \ldots, \kappa n\}$}.
\]
\end{assumption}

\bigskip
 
Very roughly speaking the condition  means that asymptotically, or from ``far away'', the graph looks like an $r$-dimensional grid with $n$ vertices.
From \cite{me} we know the assumption is satisfied for $d$-dimensional grids with $r$ equal to the dimension $d$, hence our results on the minimax rates include the usual statements for regression and classification with regular, fixed design. We stress however that the constant $r$ does not need to be a natural number. For given graphs the parameter $r$ can be calculated numerically. For example, in \cite{me} a Watts-Strogatz \quotation{small world} graph is considered which satisfies the condition with $r$ equal to $1.4$. 
Observe that we do not assume the existence of a \quotation{limiting manifold} for the graph as $n\to\infty.$ See \cite{me} for more discussion of the geometry assumption and more examples.

We describe the smoothness of the function of interest by assuming it belongs 
to a Sobolev-type ball of the form 
\begin{equation}
\label{eq: h}
H^\beta(Q) = \Big\{f: \qv{f, (I+(n^\frac{2}{r} L)^\beta) f}_n \le Q^2\Big\}
\end{equation}
for some $\beta, Q > 0$ (independent of $n$). 
This should be viewed as the natural discrete graph version 
of the usual notion of a Sobolev ball of functions on $[0,1]^r$. 
(This is most easily seen in the case of the path graph and $r=1$, 
as illustrated in Example 3.1 of \cite{me}.)
The particular normalisation, which depends on the 
geometry parameter $r$, ensures non-trivial asymptotics. 
Again, we stress that we do not assume that the functions on the graph 
are discretised versions of certain continuous objects on a ``limiting manifold''.

\section{Main results}
\label{main}

Now that we have introduced ways to quantify the graph geometry 
and the regularity of the target function, we can formulate
our minimax results for regression and classification. 
In both cases, $G = G_n$ will be a connected simple undirected graph with vertices 
$1,\dots,n$, satisfying the geometry assumption for $r\geq 1$. 
The target function will be a regression function that is observed with additive Gaussian 
noise in the regression case and a binary regression function in the classification case.

In the regression case we assume that we have observations $Y = (Y_1,\dots,Y_n)$ at the vertices of the graph satisfying
\begin{equation}
\label{model}
Y_i=f(i)+\sigma \xi_i, \qquad i =1, \ldots, n, 
\end{equation}
where the $\xi_i$ are independent standard Gaussians, $\sigma>0$ and $f: \{1, \ldots, n\} \to \RR$ is the unknown function of interest. We denote the corresponding distribution of $Y$ by $\PP_{f}$ 
and the associated expectation by $\EE_{f}$. 
Our main result in this setting is the following.\footnote
{We write $a_n \asymp b_n$ if $0 < \liminf a_n/b_n \le \limsup a_n/b_n < \infty$.}

\bigskip

\begin{thm}[Regression]
\label{general_thm_reg}
Suppose that the graph satisfies the geometry assumption for $r\geq1$.
Then for all $\beta, Q >0$
\begin{equation}
\label{minimax_rate}
\inf_{{\hat f}}\sup_{f\in H^\beta(Q)} \EE_f \|{\hat f}-f\|^2_n \asymp n^{-2\beta/(2\beta+r)},
\end{equation}
where the infimum is taken over all estimators $\hat f = \hat f(Y_1, \ldots, Y_n)$.
\end{thm}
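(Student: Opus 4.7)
The statement is two-sided, so I would split it into an upper and a lower bound, and I would rely throughout on the spectral decomposition: writing $f=\sum_i \theta_i \psi_i$ with $\theta_i=\qv{f,\psi_i}_n$, we have $\|f\|_n^2=\sum_i \theta_i^2$ and, by the geometry assumption, for $i_0\le i\le \kappa n$ the Sobolev constraint $f\in H^\beta(Q)$ is equivalent (up to constants) to $\sum_i \theta_i^2(1+i^{2\beta/r})\lesssim Q^2$.

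\textbf{Upper bound.} I would consider the projection estimator $\hat f=\sum_{i=0}^{N-1}\hat\theta_i \psi_i$ where $\hat\theta_i=\qv{Y,\psi_i}_n=\theta_i+\sigma n^{-1}\sum_j \xi_j\psi_i(j)$. Since $\qv{\psi_i,\psi_i}_n=1$ translates into $\sum_j \psi_i(j)^2=n$, each coefficient estimator has variance $\sigma^2/n$, so the risk decomposes as
\[
\EE_f\|\hat f-f\|_n^2 \;=\; \frac{N\sigma^2}{n}+\sum_{i\ge N}\theta_i^2.
\]
Using the Sobolev constraint and the geometry assumption, the tail satisfies $\sum_{i\ge N}\theta_i^2\lesssim Q^2 N^{-2\beta/r}$ (provided $N\ge i_0$, which holds eventually). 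Choosing $N\asymp n^{r/(2\beta+r)}$ balances the two terms and yields the rate $n^{-2\beta/(2\beta+r)}$ uniformly over $H^\beta(Q)$.

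\textbf{Lower bound.} Here the plan is a standard Fano-type reduction to multiple testing. Set $N\asymp n^{r/(2\beta+r)}$, and consider the hypercube
\[
f_\omega \;=\; \gamma\sum_{i=N}^{2N-1}\omega_i\psi_i,\qquad \omega\in\{0,1\}^N,
\]
with $\gamma$ to be chosen. For $n$ large the indices lie inside $[i_0,\kappa n]$, so the geometry assumption gives $\sum_i \theta_i^2(1+i^{2\beta/r})\lesssim \gamma^2 N\cdot N^{2\beta/r}$, and imposing this to be at most $Q^2$ forces $\gamma^2\lesssim Q^2 N^{-1-2\beta/r}$. By Varshamov--Gilbert there exist $M\ge 2^{N/8}$ binary vectors with pairwise Hamming distance at least $N/8$; the resulting hypotheses satisfy $\|f_\omega-f_{\omega'}\|_n^2 \gtrsim \gamma^2 N \asymp Q^2 N^{-2\beta/r}\asymp n^{-2\beta/(2\beta+r)}$. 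Meanwhile, since the observation model is Gaussian with common variance,
\[
K(\PP_{f_\omega},\PP_{f_{\omega'}})\;=\;\frac{n}{2\sigma^2}\|f_\omega-f_{\omega'}\|_n^2 \;\lesssim\; \frac{n\gamma^2 N}{\sigma^2}\;\lesssim\; \frac{n Q^2}{\sigma^2}\,N^{-2\beta/r},
\]
which with the chosen $N$ equals a constant multiple of $N\asymp \log M$. Tuning $\gamma$ to make this constant smaller than $1/8$ allows Fano's inequality (in the form of Theorem 2.5 of Tsybakov) to yield the lower bound of order $n^{-2\beta/(2\beta+r)}$.

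\textbf{Main obstacle.} The interesting bookkeeping lies in the lower bound: the hypothesis class must be supported on indices $i\in[i_0,\kappa n]$ so that the two-sided Laplacian eigenvalue bound is in force, and one must simultaneously verify (i) the Sobolev membership, (ii) the Varshamov--Gilbert separation, and (iii) the Fano condition on Kullback--Leibler divergence. All three depend on the interplay between $\gamma$, $N$ and the geometry parameter $r$, and the same choice $N\asymp n^{r/(2\beta+r)}$ has to succeed in each. Checking that the constants line up so that a single $\gamma$ can be chosen (depending on $Q$, $\sigma$, and the geometry constants $C_1,C_2,\kappa$) is the routine but crucial step.
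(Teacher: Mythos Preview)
Your argument is correct, but both halves take a different route from the paper. For the upper bound the paper deploys the full Pinsker machinery: it solves the equation $\frac{\eps^2}{x}\sum_j a_j(1-xa_j)_+=Q$ for the ellipsoid weights $a_j^2=1+n^{2\beta/r}\lambda_j^\beta$, checks via a technical lemma that $x\asymp n^{-\beta/(2\beta+r)}$ and $S=\eps^2\sum_j(1-xa_j)_+\asymp n^{-2\beta/(2\beta+r)}$, and then invokes the linear minimax lemma. Your plain projection estimator with cutoff $N\asymp n^{r/(2\beta+r)}$ gives the same rate with far less overhead; the Pinsker route would only pay off if one cared about the sharp constant, which the paper does not. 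For the lower bound the paper again follows Pinsker: it bounds the minimax risk from below by a Gaussian-prior Bayes risk, splits it as $I^\star-r^\star$, and controls the residual $r^\star$ via $\max_j v_j^2a_j^2=O(n^{-r/(2\beta+r)})$. Your Fano/Varshamov--Gilbert hypercube on the frequency band $[N,2N)$ is precisely the construction the paper reserves for the \emph{classification} theorem; applied here it is entirely legitimate and arguably more transparent, since the Gaussian KL is explicit and one avoids the Bayes-risk bookkeeping. The one advantage of the paper's Pinsker approach is that the same calculation simultaneously furnishes the upper bound and the key quantities for the lower bound, so the two halves share their technical lemma; your two halves are independent but each is shorter.
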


\bigskip

The theorem shows that the minimax rate for the regression problem on the graph is equal to $n^{-\beta/(2\beta+r)}$. We obtain the upper bound on the rate by constructing a projection estimator $\tilde f$ for which
\[
\sup_{f\in H^\beta(Q)} \EE_f \|{\tilde f}-f\|^2_n \lle n^{-2\beta/(2\beta+r)}.
\]
The proof shows that this estimator depends on the regularity level of the target function and therefore is not adaptive. An adaptive (Bayesian) procedure is exhibited in 
 \cite{me}.

In the binary classification case we assume that the data $Y_1, \ldots, Y_n$ are 
independent $\{0, 1\}$-valued variables, observed at the vertices of the graph. 
In this case the goal is to estimate the binary regression function $\rho$, or ``soft label function'' on the 
graph, defined by 
\[
\rho(i) = \PP(Y_i = 1). 
\]
The function $\rho$, of course, determines the distribution of the data, which we therefore 
denote by $\PP_\rho$. Again, the associated expectations are denoted by $\EE_\rho$. 

Technically the classification case is slightly more demanding. 
Different from the regression case we also have to impose conditions on the 
Laplacian eigenfunctions $\psi_j$ in this case. Moreover, 
we impose the regularity condition not directly on the binary regression function $\rho$, 
but on a suitably transformed version of it, so that it maps into $\RR$ instead of $(0,1)$. 
Concretely, we fix a differentiable link function $\Psi: \RR \to (0, 1)$
such that 
${\Psi'}/({\Psi(1-\Psi)})$ is uniformly bounded, and $\Psi' > 0$ everywhere. 
Note that for instance the 
sigmoid, or logistic link $\Psi(f) = 1/(1+\exp(-f))$ satisfies this condition. 
Under these conditions the inverse $\Psi^{-1}: (0,1)\to \RR$ is well defined. In the classification setting the regularity condition will be formulated in terms of $\Psi^{-1}(\rho)$.

Recall from Section \ref{sec: setting} that $\psi_j$ is a sequence of eigenfunctions (or vectors) 
of the graph Laplacian $L$ that is orthonormal with respect to the norm $\|\cdot\|_n$. 
In particular, 
the eigenfunctions $\psi_j$ are normalised such that 
\[
\frac1n \sum_{i=1}^n\psi_j^2(i) = 1
\]
for every $j$. In the following theorem we assume in addition that they are uniformly
bounded by a common constant $C>0$, which is independent of $n$, i.e.\
that $|\psi_j(i)| \le C$ for every $i$ and $j$. 
We need this technical assumption in the proof of the lower bound.

\bigskip

\begin{thm}[Classification]
\label{general_thm_cl}
Suppose that the graph satisfies the geometry assumption for $r\geq1$
and that the Laplacian eigenfunctions are uniformly bounded by a common constant $C > 0$, independent of $n$. 
 Let $\Psi: \RR \to (0,1)$ be a differentiable link function as above with inverse $\Psi^{-1}$. 
 Then for $\beta\geq r/2$ and $Q > 0$
\[
\inf_{{\hat \rho}} \sup_{\rho: \Psi^{-1}(\rho)\in H^{\beta}(Q)} \EE_\rho \|{\hat \rho}-\rho\|^2_n\asymp n^{-2\beta/(2\beta+r)},
\]
where the infimum is taken over all estimators $\hat\rho = \hat\rho(Y_1, \ldots, Y_n)$.
\end{thm}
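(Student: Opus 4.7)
The plan is to prove matching upper and lower bounds of order $n^{-\beta/(2\beta+r)}$.

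\textbf{Upper bound via sieved MLE.} Take $N=\lceil n^{r/(2\beta+r)}\rceil$, let $V_N=\mathrm{span}\{\psi_0,\dots,\psi_{N-1}\}$, and define $\hat f$ as the maximizer of the Bernoulli log-likelihood $\sum_i[Y_i\log\Psi(g(i))+(1-Y_i)\log(1-\Psi(g(i)))]$ over $g\in V_N$, with $\hat\rho=\Psi(\hat f)$. Standard M-estimation arguments---relying on $\beta\ge r/2$ to keep the true $f$ and the sieve elements uniformly bounded so that $\Psi'$ stays between positive constants on the relevant range---give $\EE\|\hat f-f\|_n^2\lle N/n+\|f-f_N\|_n^2$, where $f_N$ is the $V_N$-projection of $f=\Psi^{-1}(\rho)$. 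The Sobolev condition combined with the lower eigenvalue bound $\lambda_N\gge(N/n)^{2/r}$ from the geometry assumption gives
\[
\|f-f_N\|_n^2=\sum_{j\ge N}f_j^2\le\frac{Q^2}{(n^{2/r}\lambda_N)^\beta}\lle N^{-2\beta/r}.
\]
Balancing $N/n$ against $N^{-2\beta/r}$ at $N\asymp n^{r/(2\beta+r)}$ yields $\EE\|\hat f-f\|_n^2\lle n^{-2\beta/(2\beta+r)}$. The Lipschitz bound $\|\Psi(\hat f)-\Psi(f)\|_n\lle\|\hat f-f\|_n$ (which follows from the hypothesis, since $\Psi'=\Psi(1-\Psi)\cdot\Psi'/(\Psi(1-\Psi))\le\tfrac14\cdot\mathrm{const}$) transfers the estimate to $\hat\rho$.

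\textbf{Lower bound via Fano.} Set $N\asymp n^{r/(2\beta+r)}$ and work in the middle-frequency block $W_N=\mathrm{span}\{\psi_j:N\le j<2N\}$. By the Varshamov--Gilbert lemma, choose $\{\omega^{(1)},\dots,\omega^{(M)}\}\subset\{0,1\}^N$ with $M\ge e^{cN}$ and pairwise Hamming distance $\ge N/8$, and define $f^{(m)}=\epsilon\sum_{j=N}^{2N-1}\omega^{(m)}_{j-N+1}\psi_j$ with $\epsilon\asymp n^{-1/2}$. Using $\lambda_j\le C_2(j/n)^{2/r}$ from the geometry assumption,
\[
\qv{f^{(m)},(I+(n^{2/r}L)^\beta)f^{(m)}}_n\lle\epsilon^2 N\cdot N^{2\beta/r}=\epsilon^2 N^{(2\beta+r)/r}\asymp\epsilon^2 n\asymp 1,
\]
so $f^{(m)}\in H^\beta(Q)$ for a small enough constant in $\epsilon$; orthonormality then gives the separation $\|f^{(m)}-f^{(m')}\|_n^2\ge\epsilon^2 N/8\asymp n^{-2\beta/(2\beta+r)}$, which is the claimed lower rate provided the Fano step goes through.

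\textbf{The main obstacle.} The technical heart---and the reason for the two additional hypotheses in the theorem---is controlling the Bernoulli KL divergences in Fano's inequality. This requires $\rho^{(m)}=\Psi(f^{(m)})\in[\alpha,1-\alpha]$ uniformly in $m$ and $n$, hence $\|f^{(m)}\|_\infty$ uniformly bounded. The eigenfunction bound $|\psi_j|\le C$ and the triangle inequality give $\|f^{(m)}\|_\infty\le\epsilon NC$, and
\[
\epsilon N\asymp n^{-1/2+r/(2\beta+r)}=n^{(r-2\beta)/(2(2\beta+r))}
\]
is $O(1)$ precisely when $\beta\ge r/2$. With this $L^\infty$ control in hand, $\Psi'$ is bounded above and below on the range of the $f^{(m)}$, so $\|\rho^{(m)}-\rho^{(m')}\|_n\asymp\|f^{(m)}-f^{(m')}\|_n$ and $\mathrm{KL}(\PP_{\rho^{(m)}}\|\PP_{\rho^{(m')}})\lle n\|\rho^{(m)}-\rho^{(m')}\|_n^2\lle N$. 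Taking the constant in $\epsilon$ small enough makes this at most $\tfrac18\log M$, and Fano's inequality delivers the matching lower bound.
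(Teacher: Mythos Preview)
Your lower bound is essentially the paper's argument: Fano's inequality, Varshamov--Gilbert hypotheses built from Laplacian eigenfunctions with amplitude $\asymp n^{-1/2}$, the KL bound via boundedness of $\Psi'/(\Psi(1-\Psi))$, and the observation that $\|f^{(m)}\|_\infty\lle \epsilon N\max|\psi_j|\lle n^{(r-2\beta)/(2(2\beta+r))}$ is $O(1)$ exactly when $\beta\ge r/2$, which is what converts the $\|\cdot\|_n$ separation on the $f$-scale into one on the $\rho$-scale. The only cosmetic differences are that the paper places the hypotheses in the low-frequency block $\{\psi_0,\dots,\psi_{N-1}\}$ with $\{\pm1\}$ coefficients and compares everything to the single reference $f\equiv 0$ (Tsybakov's Corollary~2.6), whereas you use the block $\{\psi_N,\dots,\psi_{2N-1}\}$ with $\{0,1\}$ coefficients and pairwise KL; neither change affects the rate.

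Your upper bound, however, is a genuinely different route. The paper does not run an M-estimation argument: it simply recycles the Pinsker-type linear estimator from the regression proof, forms $\tilde\rho=\Psi\bigl(\sum_i\tilde f_i\psi_i\bigr)$, and invokes Lipschitzness of $\Psi$ together with the regression risk bound. That argument is shorter but, as written in the paper, glosses over the change of noise model. Your sieved-MLE approach is more honest about the Bernoulli likelihood, and the variance--bias decomposition $N/n+\|f-f_N\|_n^2$ together with the tail bound $\|f-f_N\|_n^2\le Q^2/(n^{2/r}\lambda_N)^\beta\lle N^{-2\beta/r}$ is exactly right. The one place to be careful is the appeal to ``standard M-estimation'': you need curvature of the Bernoulli log-likelihood uniformly over the relevant range, which requires $\|f\|_\infty$ bounded. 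For $f\in H^\beta(Q)$ and bounded eigenfunctions, Cauchy--Schwarz gives $\|f\|_\infty\le C\sum_j|f_j|\le CQ\bigl(\sum_j(1+n^{2\beta/r}\lambda_j^\beta)^{-1}\bigr)^{1/2}$, and under the geometry assumption this sum behaves like $\sum_j j^{-2\beta/r}$, which is bounded for $\beta>r/2$ but picks up a $\log n$ at the endpoint $\beta=r/2$. So the sentence ``relying on $\beta\ge r/2$ to keep the true $f$ \dots\ uniformly bounded'' is slightly optimistic at the boundary; either restrict to $\beta>r/2$, or note that a $\sqrt{\log n}$ growth in $\|f\|_\infty$ is harmless for the rate since the curvature loss is at most polynomial in $\|f\|_\infty$ for the logistic link.
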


\bigskip

Also in this case the proof of the theorem provides an estimator that 
achieves the lower bound, but that estimator depends on $\beta$ and hence
is non-adaptive. Adaptive, rate-optimal (Bayes) procedures have been exhibited 
for this classification setting as well in \cite{me}. 
Note that compared to the regression case, there is an extra technical requirement $\beta\geq r/2$. 
Additionally, we assume boundedness of the Laplacian eigenfunctions. 
In principle, for a specific case this can be verified numerically. 
For regular grids of arbitrary dimensions it is straightforward to see 
that this condition is fulfilled.

Indeed, by \cite{eigenvectors}, for instance, the (unnormalised) eigenvectors of the graph Laplacian of the path graph are given by 
\[
\tilde\psi_j(i)= \cos\left(\pi ij/n-\pi j/2n\right).
\] 
For the $\|\cdot\|_n$-norm of the $j$th eigenvector we then have 
\begin{align*}
\|\tilde\psi_j\|_n^2 & =\frac 1n\sum_{i=1}^{n}\cos^2\left(\pi ij/n-\pi j/2n\right)=\\
& =\frac12+\frac 1{4n\sin(\pi j/n)}\sumin2\sin(\pi j/n)\cos((2i-1)\pi j /n).
\end{align*}
By well known trigonometric identities we have, for any $x\in \RR$,
\[
\sumin 2\sin x \cos(2ix-x)=\sumin(\sin 2ix- \sin (2ix-2x))=\sin 2 n x.
\]
It follow that for any $j=1,\dots,n-1$, 
\[
\|\tilde\psi_j\|_n^2=\frac 12+ \frac 1{4n} \frac{\sin2\pi j}{\sin\pi j/n}=\frac12.
\]
Notice that $\|\tilde\psi_0\|_n^2=1.$ 
So we see that indeed, the normalised eigenvectors $\psi_j$ of the Laplacian of the path graph 
are uniformly bounded by a common constant.
The eigenvectors of a Cartesian product of two graphs are equal to the Kronecker products of pairs of eigenvectors associated with the Laplacians of those graphs. Since the grids of higher dimensions are products of path graphs, they satisfy the condition as well.

\section{Proofs}
\label{proofs}

\subsection{Proof of Theorem \ref{general_thm_reg}}
\label{proof_reg}

In the regression case we first expand the observations in the 
eigenbasis of the graph Laplacian, which brings the problem into the setting of the 
sequence formulation of the white noise model. 
Then we adapt techniques from the proof of Pinsker's theorem as can it is given, 
for instance, in \cite{tsybakov}.

\subsubsection{Preliminaries}

Let $Y=(Y_1,\dots,Y_n)$, $\xi=(\xi_1,\dots,\xi_n)$, and let $\psi_i$ be the orthonormal eigenfunctions of the graph Laplacian. Denote $\tilde\xi_i=\langle\xi, \psi_i\rangle_n $ and observe that the $\tilde \xi_i$ are centred Gaussian with
\[
\EE \tilde\xi_i\tilde\xi_j= \frac 1n \delta_{ij}.
\]
The inner products $Z_i=\langle Y, \psi_i\rangle_n$ satisfy the following relation for $i=0,\dots, n-1$
\[
Z_i= \langle Y, \psi_i\rangle_n=f_i+ \sigma \tilde\xi_i,
\]
where $f_i$ are coefficients in the decomposition of the target function $
f_0=\sum_{i=0}^{n-1} f_i\psi_i.
$
Additionally, consider the decomposition of an estimator $\hat f=\sum_{i=0}^{n-1} \hat f_i \psi_i$. Then
\[
\|\hat f- f_0\|^2_n= \left\langle \sum_{i=0}^{n-1} (\hat f_i-f_i)\psi_i, \sum_{i=0}^{n-1} (\hat f_i-f_i)\psi_i\right\rangle_n=\sum_{i=0}^{n-1} (\hat f_i-f_i)^2.
\] 

Hence, the minimax rates for the original problem are of the same order as the minimax rates for the problem of recovering $f=(f_0,\dots,f_{n-1})$, given the observations
\begin{equation}
\label{model1}
Z_i=f_i+\eps\zeta_i,
\end{equation}
where $\zeta_i$ are independent standard Gaussian and $\eps=\frac \sigma {\sqrt{n}}$. To avoid confusion we define general ellipsoids on the space of coefficients for an arbitrary sequence $a_j> 0$ and some finite constant $Q>0$
\begin{equation}
\label{ellips}
B_n(Q)=\{ f\in\RR^n: \jjsumn a_j^2 f_j^2\leq Q\}.
\end{equation}
For a function $f$ in the Sobolev-type ball $H^\beta(Q)$ its vector of coefficients belongs to $B_n(Q)$ with 
\begin{align*}
a_j^2=&1+ \lambda_j^{2\beta/r}n^{2\beta/r}, \,j=0,\dots,n-1.
\end{align*}

In order to prove the theorem it is sufficient to show that
\begin{equation}
\label{eq: statement}
\inf_{{\hat f}}\sup_{f\in B_n(Q)} \EE_f \left(\sum_{i=0}^{n-1} (\hat f_i-f_i)^2\right) \asymp n^{-2\beta/(2\beta+r)}.
\end{equation}

We are going to follow the proof of Pinsker's theorem (see for example \cite{tsybakov}) which studies a similar case in the setting of the Gaussian white noise model on the interval $[0,1]$. The proof requires some modifications arising from the nature of our problem. The main differences 
with the usual lower bound result over Sobolev balls in the infinite sequence model 
 are that we only have $n$ observations and that our 
 ellipsoids $B_n(Q)$ have a special form.

 In order to proceed we first consider the problem of obtaining minimax rates in the class of linear estimators. We introduce Pinsker's estimator and recall the linear minimax lemma showing that Pinsker's estimator is optimal in the class of linear estimators. The risk of a linear estimator $\hat f(l)=(l_0 Z_0,\dots, l_{n-1}Z_{n-1})$ with $l=(l_0,\dots, l_{n-1})\in\RR^n$ is given by
\[
R(l, f)=\EE_ f\jjsumn(\hat f_j- f_j)^2=\jjsumn\left((1-l_j)^2f_j^2+\eps^2l_j^2\right).
\]
For large $n$ we introduce the following equation with respect to the variable $x$
\begin{equation}
\label{kappaeq}
\frac{\eps^2}{x}\jjsumn a_j(1-x a_j)_+=Q.
\end{equation}
Suppose, there exists a unique solution $x$ of (\ref{kappaeq}). For such a solution, define a vector of coefficients $l'$ consisting of entries
\begin{equation}\label{eq: lprime}
l'_j=(1-x a_j)_{+}.
\end{equation}
The linear estimator $\tilde f=\tilde{f}(l')$ is called the Pinsker estimator for the general ellipsoid $B_n(Q)$. The following lemma, which appears as Lemma 3.2 in \cite{tsybakov}, shows that the Pinsker estimator is a linear minimax estimator. 

\begin{lem}[Linear minimax lemma]
\label{linear_minimax}
Suppose that $B_n(Q)$ is a general ellipsoid defined by (\ref{ellips}) with $Q>0$ and a positive set of coefficients $\{a_j\}_{j=0}^{n-1}$. Suppose there exists a unique solution $x$ of (\ref{kappaeq}) and suppose that the associated coefficients $l'_j$ defined by \eqref{eq: lprime} satisfy
\begin{equation}
\label{eq: s}
S=\eps^2\jjsumn l'_j<\infty.
\end{equation}
Then the linear minimax risk satisfies
\begin{equation}
\inf_{l\in\RR^n}\sup_{f\in B_n(Q)}R(l,f) =\sup_{f\in B_n(Q)} R(l',f)=S.
\end{equation}
\end{lem}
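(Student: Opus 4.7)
The plan is to mirror Pinsker's classical argument from the Gaussian sequence model, adapted to the finite-dimensional ellipsoid $B_n(Q)$. The proof splits into a matching upper bound for the Pinsker estimator and a lower bound for arbitrary linear estimators, linked by one algebraic identity involving $S$.

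For the upper bound $\sup_{f \in B_n(Q)} R(l', f) \le S$, I would start from the inequality $1 - l'_j = \min(1, x a_j) \le x a_j$, which gives $\sum_j (1 - l'_j)^2 f_j^2 \le x^2 \sum_j a_j^2 f_j^2 \le x^2 Q$ for every $f \in B_n(Q)$. Separately, the identity $(l'_j)^2 = l'_j - x a_j l'_j$ (trivial when $l'_j = 0$ and immediate from $l'_j = 1 - x a_j$ otherwise), combined with equation (\ref{kappaeq}) rewritten as $\eps^2 \sum_j a_j l'_j = x Q$, yields $\eps^2 \sum_j (l'_j)^2 = S - x^2 Q$. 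Adding the two bounds gives $R(l', f) \le S$, and equality is attained by concentrating $f$ on any coordinate $j$ with $x a_j \le 1$ at the boundary of the ellipsoid; such a coordinate must exist, since solving (\ref{kappaeq}) with $Q > 0$ forces $\sum_j a_j l'_j > 0$.

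For the lower bound $\inf_l \sup_{f \in B_n(Q)} R(l, f) \ge S$, I would introduce the least-favourable Gaussian prior $\pi$ on $\RR^n$ with independent coordinates $f_j \sim N(0, s_j^2)$, where $s_j^2 := \eps^2 l'_j/(1 - l'_j)$ when $l'_j > 0$ and $s_j^2 := 0$ otherwise. Equation (\ref{kappaeq}) ensures $\sum_j a_j^2 s_j^2 = Q$. The Bayes risk of a linear estimator with coefficients $l$ equals $B(l) = \eps^2 \sum_j l_j^2 + \sum_j (1 - l_j)^2 s_j^2$ and is minimised at $l_j = s_j^2/(s_j^2 + \eps^2) = l'_j$, with minimum value $S$. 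Since $\pi$ is not supported on $B_n(Q)$, I transfer this to the constrained problem through the weighted-average bound
\[
\sum_j (1 - l_j)^2 s_j^2 \;=\; \sum_j \frac{(1 - l_j)^2}{a_j^2} \cdot a_j^2 s_j^2 \;\le\; Q \cdot \max_j \frac{(1 - l_j)^2}{a_j^2} \;=\; \sup_{f \in B_n(Q)} \sum_j (1 - l_j)^2 f_j^2,
\]
where the final equality holds because the supremum of a convex quadratic over the ellipsoid concentrates at a single coordinate. Hence $B(l) \le \sup_{f \in B_n(Q)} R(l, f)$ for every $l$, and taking infima gives $S = \inf_l B(l) \le \inf_l \sup_{f \in B_n(Q)} R(l, f)$, closing the chain of inequalities.

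The step I expect to require most care is the algebraic identity $\eps^2 \sum_j (l'_j)^2 = S - x^2 Q$ in the upper bound, because one must correctly combine the two regimes $x a_j < 1$ and $x a_j \ge 1$ introduced by the positive part in the definition of $l'_j$. The lower bound, by contrast, sidesteps the usual Gaussian-truncation technicalities thanks to the weighted-average estimate; this cleanness is specifically enabled by the quadratic structure of both the risk and the constraint defining $B_n(Q)$.
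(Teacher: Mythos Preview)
The paper does not prove this lemma: it simply quotes it as Lemma~3.2 of \cite{tsybakov}. Your argument is correct and is essentially the standard Pinsker proof found there --- the upper bound via the identity $\eps^2\sum_j (l'_j)^2 = S - x^2 Q$ and the lower bound via the least-favourable variance sequence $s_j^2 = \eps^2 l'_j/(1-l'_j)$ satisfying $\sum_j a_j^2 s_j^2 = Q$.

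One minor simplification worth noting: the Gaussian prior is not actually needed in the lower bound. Since $\sum_j a_j^2 s_j^2 = Q$, the \emph{deterministic} point $f^\star = (s_0,\dots,s_{n-1})$ lies on the boundary of $B_n(Q)$, and for every $l$ one has directly
\[
\sup_{f\in B_n(Q)} R(l,f) \;\ge\; R(l,f^\star) \;=\; \eps^2\sum_j l_j^2 + \sum_j (1-l_j)^2 s_j^2 \;=\; B(l),
\]
so the weighted-average inequality and the concern about $\pi$ not being supported on $B_n(Q)$ can be bypassed entirely. This is in fact how the argument is usually presented; your route through the Bayes risk is valid but slightly longer than necessary.
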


In order to be able to apply Lemma \ref{linear_minimax} in our graph setting 
we need the following technical lemma.

\begin{lem}
\label{pinsker_const}
Consider the ellipsoid $ B_n(Q)$ defined by (\ref{ellips}) with $Q>0$ and 
\begin{align*}
a_j^2=&1+ \lambda_j^{2\beta/r}n^{2\beta/r}, \,j=0,\dots,n-1.
\end{align*}
 Then, as $n\to\infty$, we have the following
\begin{enumerate}[(i)]
\item There exists a solution $x$ of (\ref{kappaeq}) which is unique and satisfies 
\[
x\asymp n^{-\beta/(2\beta+r)}.
\]
\item The associated sum \eqref{eq: s} of the coefficients of the Pinsker's estimator satisfies
\[
S\asymp n^{-2\beta/(2\beta+r)}.
\]
\item For $\eps=\frac{\sigma}{\sqrt n}$ define $v_j=\frac{\eps^2(1-xa_j)_{+}}{xa_j}$. Then 
\[
\max_{j=0,\dots,n-1}v_j^2 a_j^2=O\left(n^{-r/(2\beta+r)}\right).
\]
\end{enumerate}
\end{lem}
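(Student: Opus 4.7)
The plan is to handle (i) by first showing strict monotonicity of the left-hand side of (\ref{kappaeq}) and then extracting the precise rate of $x$ via Riemann-sum approximations built on the geometry assumption. Parts (ii) and (iii) will follow as consequences. Writing $g(x)$ for the left-hand side of (\ref{kappaeq}), each summand $a_j(1-xa_j)_+/x = a_j/x - a_j^2$ is continuous and strictly decreasing in $x$ on $(0, 1/a_j]$ and vanishes beyond, so $g$ is continuous and strictly decreasing on $(0, 1/\min_j a_j]$ with $g(0^+) = +\infty$ and $g$ eventually zero. Hence $g(x) = Q$ has a unique positive solution, which takes care of the existence-and-uniqueness half of (i).

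For the rate of $x$, I will use that the geometry assumption together with the definition of $a_j$ yields $a_j \asymp j^{\beta/r}$ for $j \in \{i_0, \ldots, \kappa n\}$ (with finitely many bounded terms for $j < i_0$). Introduce the threshold $N = N(x) := \max\{j : xa_j < 1\}$; the balance $xa_N \asymp 1$ forces $N \asymp x^{-r/\beta}$. Splitting
\[
g(x) = \frac{\eps^2}{x}\Big(\sum_{j=0}^N a_j - x\sum_{j=0}^N a_j^2\Big)
\]
and comparing the two sums to $\int_0^N t^{\beta/r}\,dt$ and $\int_0^N t^{2\beta/r}\,dt$, a short computation at the balanced scale yields a difference of order $N^{1+\beta/r}$ with strictly positive leading constant. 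Hence $g(x) \asymp \eps^2 x^{-(2\beta+r)/\beta}$, and equating to $Q$ using $\eps^2 = \sigma^2/n$ pins down $x \asymp n^{-\beta/(2\beta+r)}$, and consequently $N \asymp n^{r/(2\beta+r)}$, which sits safely inside $[i_0, \kappa n]$ for large $n$, justifying the use of the geometry estimate throughout the sum. The same strategy then handles (ii):
\[
S = \eps^2\sum_{j=0}^N(1-xa_j) = \eps^2\Big((N+1) - x\sum_{j=0}^N a_j\Big) \asymp \eps^2 N \asymp n^{-2\beta/(2\beta+r)}.
\]
For (iii) a direct bound suffices: $v_j^2 a_j^2 = \eps^4(1-xa_j)_+^2/x^2 \le \eps^4/x^2 \asymp n^{-(2r+2\beta)/(2\beta+r)}$, which is even $o(n^{-r/(2\beta+r)})$.

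The main technical obstacle is the near-cancellation between $\sum_{j \le N} a_j$ and $x\sum_{j \le N} a_j^2$ (and the analogous pair in (ii)): at the balanced $x$ these two quantities are of the same order, so two-sided $\asymp$-estimates on each are insufficient to control their difference, which could a priori be of much smaller order. The work is to produce Riemann-sum estimates with explicit leading constants so that the difference is bounded \emph{below} (not merely above) by a strictly positive multiple of $N^{1+\beta/r}$. Once that sharper estimate is secured, the rest is routine power-counting.
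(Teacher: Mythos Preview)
Your overall route is close to the paper's, but the device you propose for the ``main technical obstacle'' does not work under the hypotheses available, and this is where the paper proceeds differently.

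The geometry assumption only yields two-sided control $c_1 j^{\beta/r}\le a_j\le c_2 j^{\beta/r}$ (for $i_0\le j\le\kappa n$) with constants $c_1<c_2$ that need not be close. Consequently you cannot extract matching leading constants for $\sum_{j\le N}a_j$ and $\sum_{j\le N}a_j^2$ separately and then subtract: the resulting lower bound for the difference is
\[
\Big(\frac{c_1}{\beta+r}-\frac{c_2^2}{c_1(2\beta+r)}\Big)\,r\,N^{1+\beta/r}+o(N^{1+\beta/r}),
\]
which is negative whenever $(c_2/c_1)^2>(2\beta+r)/(\beta+r)$, a possibility the assumption does not exclude. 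The same objection applies to your argument for (ii), where $N+1$ and $x\sum_{j\le N}a_j$ are again of the same order. So ``Riemann sums with explicit leading constants'' cannot close the gap as written.

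The paper sidesteps this entirely by invoking Tsybakov's Lemma~3.1, which produces the closed form
\[
x=\frac{\eps^2\sum_{j<N}a_j}{Q+\eps^2\sum_{j<N}a_j^2},\qquad N=\max\Big\{m:\eps^2\sum_{j<m}a_j(a_m-a_j)<Q\Big\}.
\]
Here one first pins down $N\asymp n^{r/(2\beta+r)}$ by sandwiching $A_m=\eps^2\sum_{j<m}a_j(a_m-a_j)$, and then reads off $x$ from the ratio; the $+Q$ in the denominator kills any cancellation. If you prefer to stay with your direct analysis of $g(x)$, the clean repair is \emph{not} to split: the summands $a_j(1-xa_j)_+$ are nonnegative, and for $j\le c_0 N$ with $c_0=(c_1/(2c_2))^{r/\beta}$ one has $xa_j\le\frac12$, whence
\[
\sum_{j\le N}a_j(1-xa_j)\ \ge\ \tfrac12\sum_{i_0\le j\le c_0 N}a_j\ \gtrsim\ N^{1+\beta/r},
\]
and similarly $\sum_{j\le N}(1-xa_j)\ge \tfrac12\lfloor c_0 N\rfloor\gtrsim N$ for (ii). The upper bounds are immediate. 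Your treatment of (iii) is correct and in fact sharper than required.
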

\begin{proof}

\begin{enumerate}[(i)]
\item

According to Lemma 3.1 from \cite{tsybakov} for large enough $n$ and for an increasing positive sequence $a_j$ with $a_n\to+\infty$, as $n\to\infty$, there exists a unique solution of (\ref{kappaeq}) given by
\[
x=\frac{\eps^2\jjsumN a_j}{Q+\eps^2\jjsumN a_j^2},
\]
where 
\[
N=\max\left\{m:\eps^2\sum_{j=0}^{m-1} a_j(a_m-a_j)<Q\right\}<+\infty.
\]
Consider $N$ defined above. Denote
\begin{multline*}
A_m=\eps^2\sum_{j=0}^{m-1} a_j(a_m-a_j)=\\=n^{\beta/r-1}\sigma^2\sum_{j=0}^{m-1} \sqrt{\left(1+\lambda_j^{2\beta/r}n^{2\beta/r}\right)\left(\lambda_m^{2\beta/r}n^{2\beta/r}-\lambda_j^{2\beta/r}n^{2\beta/r}\right)}.
\end{multline*}
Since the geometry condition on the graph is satisfied, for $j=i_0,\dots,\kappa n$ the eigenvalues of the graph Laplacian can be bounded in a following way
\[
C_1\Big(\frac j n\Big)^{2/r} \le \lambda_j \le C_2\Big(\frac j n\Big)^{2/r}.
\]
If $m<\kappa n$, then for some $K_1>0$
\[
A_m\geq n^{-1}\sigma^2 C_1\sum_{j=i_0}^{m-1} j^{\beta/r}\left(C_1 m^{\beta/r}-C_2 j^{\beta/r}\right)\geq K_1 n^{-1} m^{\beta/r}\sum_{j=i_0}^{m-1} j^{\beta/r}.
\]
Since $A_m$ is an increasing function of $m$, there exists $K_2>0$ such that for all 
\[
m>K_2 n^{r/(2\beta+r)}
\]
it holds that $A_m>Q$.
In a similar manner we can show that there exists $K_3>0$ such that for all 
\[
m<K_3 n^{r/(2\beta+r)}
\]
it holds that $A_m<Q$.

This leads us to the conclusion that $N\asymp n^{r/(2\beta+r)}$. Then equation (\ref{kappaeq}) has a unique solution that satisfies
\[
x=\frac {\sigma^2}{n\left(Q+\frac {\sigma^2}n \jjsumN a_j^2\right)}\jjsumN a_j \asymp\frac1n N^{1+\beta/r}\asymp n^{-\beta/(2\beta+r)}.
\]
\item Since graph $G$ satisfies the geometry assumption, we deduce from (i) that for some $K_4>0$
\[
l'_j\asymp \left(1-K_4 n^{-\beta/(2\beta+r)} j^{\beta/r}\right)_+, \text{ for } j= i_0,\dots, N.
\]
For $j=0,\dots, i_0 -1$ we bound $l'_j$ from above by $1$.
Then
\[
S\lle n^{-1}i_0 +n^{-1}\sum_{j=i_0}^{N-1} \left(1- K_4 n^{-\beta/(2\beta+r)} j^{\beta/r}\right)_+\lle n^{-1}N\lle n^{-2\beta/(2\beta+r)}.
\]
On the other hand, 
\[
S\gge n^{-1}\sum_{j=i_0}^{N-1} \left(1- K_4 n^{-\beta/(2\beta+r)} j^{\beta/r}\right)_+\gge n^{-2\beta/(2\beta+r)}.
\]
\item Note that for $j>N$ we have $v_j^2=0.$ We also know that $a_N<\frac{1}{x}.$ Then
\[
v_j^2 a_j^2=\frac{\sigma^2 a_j(1-x a_j)_+}{nx}\leq\frac{\sigma^2 a_N}{nx}\leq\frac{\sigma^2}{nx^2}.
\]
Hence, as $n\to\infty$,
\[
\max_{j=0,\dots,n-1}v_j^2 a_j^2=O\left(n^{-r/(2\beta+r)}\right).
\]
\end{enumerate}
This finishes the proof of the lemma.
\end{proof}

\subsubsection{Proof of the upper bound on the risk}
\label{perfect_test}

Recall that we only need to provide the upper bound in \eqref{eq: statement}. Consider the Pinsker estimator $\tilde f=(\l'_0 Z_0,\dots,l'_{n-1}Z_{n-1})$ with
 \[
l'_j=(1-x a_j)_{+},
\]
where $a_j^2=1+ \lambda_j^{2\beta/r}n^{2\beta/r}$ and $x$ is a unique solution of \eqref{kappaeq}. Using Lemma \ref{linear_minimax} and Lemma \ref{pinsker_const} we conclude that the Pinsker's estimator satisfies
\[
\sup_{f\in B_n(Q)} \EE_f \left(\sum_{i=0}^{n-1} (\tilde f_i-f_i)^2\right)\lle n^{-2\beta/(2\beta+r)}.
\]

\subsubsection{Proof of the lower bound on the risk}

We follow the usual steps of the general reduction scheme for obtaining minimax rates (see e.g. Chapter 3 of \cite{tsybakov} for details). First, we 
note that it is sufficient to only take into account the first $N$ coefficients in the decomposition of the target function, where 
\[
N=\max\left\{m:\eps^2\sum_{j=0}^{m-1} a_j(a_m-a_j)<Q\right\}
\]
with $a_j^2=1+ \lambda_j^{2\beta/r}n^{2\beta/r}$.
Indeed, if we 
denote the minimax risk by $R_n$, i.e.\
 \[
 R_n= \inf_{{\hat f}}\sup_{f\in B_n(Q)} \EE_f \left(\sum_{i=0}^{n-1} (\hat f_i-f_i)^2\right),
 \]
 and
 define
\begin{align*}
B_n(Q,N)=\{f^{(N)}= (f_0,\dots,f_{N-1},0,\dots,0)\in \RR^n:\sum_{j=0}^{N-1} a_j^2 f_j^2\leq Q\},
\end{align*}
then we have $B_n(Q,N)\subseteq B_n(Q)$ and hence
\begin{equation}
\label{parametric_ineq}
R_n\geq\inf_{\hat f^{(N)}\in B_n(Q,N)}\sup_{ f^{(N)}\in B_n(Q,N)}\EE_ f\jjsumN(\hat f_j- f_j)^2.
\end{equation}

Next we follow the usual step of bounding the minimax risk
by a Bayes risk. 
Consider the density $\mu( f^{(N)})=\prod_{j=0}^{N-1} \mu_{s_j} ( f_j)$ with respect to the Lebesgue measure on $\RR^N$. Here $s_j=(1-\delta)v_j^2$ for some $\delta\in(0,1)$ and $\mu_{\sigma}$ denotes the density of the Gaussian distribution with mean $0$ and variance $\sigma^2$. By (\ref{parametric_ineq}) we can bound the minimax risk from below by the Bayes risk 
\begin{equation}
\label{bayes_risk_ineq}
R_n\geq\inf_{\hat f^{(N)}\in B_n(Q,N)}\jjsumN\int_{B_n(Q,N)}\EE_ f(\hat f_j- f_j)^2\mu(f^{(N)})df^{(N)}\geq I^\star-r^\star,
\end{equation}
where
\begin{align*}
I^\star&=\inf_{\hat f^{(N)}\in B_n(Q,N)}\jjsumN\int_{\RR^N}\EE_ f (\hat f_j- f_j)^2\mu( f^{(N)})d f^{(N)};\\
r^\star&=\sup_{\hat f^{(N)}\in B_n(Q,N)}\jjsumN\int_{ B_n(Q,N)^c} \EE_ f (\hat f_j- f_j)^2\mu( f^{(N)})d f^{(N)}
\end{align*}
with $B(Q,N)^c=\RR^N\setminus B_n(Q,N).$ From the proof of Pinsker's theorem, see \cite{tsybakov}, we get the following bounds 
\begin{align*}
I^\star&\gge S,\\
r^\star&\lle \exp\left\{-K\left(\max_{j=0,\dots,n-1} v_j^2 a_j^2\right)^{-1}\right\}
\end{align*}
for some $K>0$. Using the results of Lemma \ref{pinsker_const} we conclude that $R_n\gge n^{-{2\beta}/{(2\beta+r)}}.$

\subsection{Proof of Theorem \ref{general_thm_cl}}
\label{proof_clas}

In order to prove the result in the classification case we use Fano's lemma and the usual general scheme for reducing a minimax estimation problem to a minimax testing problem.
(see for instance \cite{tsybakov} again).

\subsubsection{Proof of the upper bound on the risk}

We define the estimator that gives us an upper bound on the minimax risk based on the estimator $\tilde f$, which has been introduced in subsection \ref{perfect_test} of the proof of Theorem \ref{general_thm_reg}. Consider the estimator 
\[
\tilde \rho= \Psi\left(\sum_{i=0}^{n-1}\tilde f_i \psi_i\right).
\]
By the reasoning given in the aforementioned subsection and using the properties of the link function $\Psi$, we can see that
\begin{multline*}
\sup_{\rho\in \{\rho: \Psi^{-1}(\rho)\in H^{\beta}(Q)\}}\EE_{\rho}\|\tilde\rho-\rho\|_n^2\lle\\ \lle\sup_{f\in B_n(Q)} \EE_f \left(\sum_{i=0}^{n-1} (\tilde f_i-f_i)^2\right)\lle n^{-2\beta/(2\beta+r)}.
\end{multline*}

\subsubsection{Proof of the lower bound on the risk} 
The proof of the lower bound on the risk is based on a corollary of Fano's lemma, 
see, for instance, Corollary 2.6 in \cite{tsybakov}. Observe that by Markov's inequality for any soft label functions $\rho_0,\dots,\rho_M$ with $M\in\NN$ there exists $C>0$ such that
\begin{multline*}
\inf_{{\hat \rho}} \sup_{\rho\in \{\rho: \Psi^{-1}(\rho)\in H^{\beta}(Q)\}} \EE_\rho n^{2\beta/(2\beta+r)}\|{\hat \rho}-\rho\|^2_n\gge\\
\gge \inf_{{\hat \rho}} \max_{\rho\in\{\rho_0,\dots,\rho_M\}} \PP_{\rho} \left(\|\hat \rho - \rho\|^2_n\geq n^{-2\beta/(2\beta+r)}\right).
\end{multline*}
Consider probability measures $P_0, P_1, \dots, P_M$ corresponding to the soft label functions $\rho_0,\dots,\rho_M$. For a test $\mathscr{\phi}:\RR^n\to\{0,1,\dots,M\}$ define the average probability of error by
\[
{\bar{p}}_M (\phi)= \frac{1}{M+1}\sum_{j=0}^M P_j(\phi\neq j).
\]
Additionally, let 
\[
{\bar{p}}_M =\inf_{\phi} {\bar{p}}_M (\phi).
\]
From the general scheme for obtaining lower bounds for minimax risk (for more detail see Chapter 2 of \cite{tsybakov}) we know that if $\rho_0,\dots,\rho_M$ are such that for any pair $i,j\in\{0,\dots,M\}$ 
\begin{equation}
\label{eq: separation}
\|\rho_i-\rho_j\|_n\gge n^{-2\beta/(2\beta+r)}, \text{ when $i\neq j$},
\end{equation}
then
\[
\inf_{{\hat \rho}} \max_{\rho\in\{\rho_0,\dots,\rho_M\}} \PP_{\rho} \left(\|\hat \rho - \rho\|^2_n\geq C n^{-2\beta/(2\beta+r)}\right)\gge \bar p_M.
\]
Also, Corollary 2.6 in \cite{tsybakov} states that if $P_0, P_1, \dots, P_M$ satisfy 
\begin{equation}
\label{bound_on_P}
\frac{1}{M+1} \sum_{j=1}^M K(P_j, P_0)\leq \alpha \log M,
\end{equation}
for some $0<\alpha<1$, then 
\[
\bar p_M\geq\frac{\log(M+1)-\log 2}{\log M}-\alpha.
\]
Here $K(\cdot,\cdot)$ is the Kullback--Leibler divergence.
Hence, if we construct the probability measures $P_0, P_1, \dots, P_M$ corresponding to some soft label functions $\rho_0,\dots,\rho_M$ for which \eqref{eq: separation} and \eqref{bound_on_P} hold, we will have
\[
\inf_{{\hat \rho}} \sup_{\rho\in \{\rho: \Psi^{-1}(\rho)\in H^{\beta}(Q)\}} \EE_\rho n^{2\beta/(2\beta+r)}\|{\hat \rho}-\rho\|^2_n\gge 
\frac{\log(M+1)-\log 2}{\log M}-\alpha.
\]
If $M\to\infty$, as $n\to\infty$, the required result follows.

Let $N=n^{r/(2\beta+r)}$ and let $\psi_0,\dots,\psi_{n-1}$ to be an orthonormal eigenbasis of the graph Laplacian $L$ with respect to the $\|\cdot\|_n$--norm. For $\delta>0$ and $\theta=(\theta_0,\dots\theta_{N-1})\in\{\pm1\}^N$ define
\[
f_\theta=\delta N^{-{(2\beta+r)}/{2r}}\jjsumN \theta_j\psi_j.
\]
We will select $M$ vectors of coefficients $\theta^{(j)}$ such that the probability measures corresponding to $\rho_j= \Psi(f_{\theta^{(j)}})$ will satisfy \eqref{bound_on_P}, where $\Psi$ is the link function.

Observe that for small enough $\delta>0$ functions $f_\theta$ belong to the class $H^{\beta}(Q)$. Indeed, using the geometry assumption we obtain
\begin{multline*}
\qv{f_{\theta}, (I+(n^\frac{2}{r} L)^\beta) f_{\theta}}_n=\delta^2N^{-{(2\beta+r)}/{r}}\jjsumN(1+n^{2\beta/r}\lambda_j^\beta)\leq\\\leq\delta^2N^{-{(2\beta+r)}/{r}}\left(N+C_2 i_0^{(2\beta+r)/r}+C_2\sum_{j=i_0}^{N-1}j^{2\beta/r}\right)\leq K_1\delta^2
\end{multline*}
for some constant $K_1>0.$ 

We pick a subset $\{\theta^{(1)},\dots,\theta^{(M)}\}$ of $\{\pm1\}^N$ such that for any pair $i,j\in\{1,\dots,M\}$ such that $i\neq j$ the vectors from the subset were sufficiently distant from each other
\begin{equation}
\label{eq: hamming}
d_h(\theta^{(i)},\theta^{(j)})\gge N,
\end{equation}
where $d_h(\theta,\theta')=\jjsumN \mathds{1}_{\theta_j=\theta'_j}$ is the Hamming distance.
By the Varshamov--Gilbert bound (see for example Lemma 2.9 in \cite{tsybakov}) we know that there exist such a subset $\{\theta^{(1)},\dots,\theta^{(M)}\}$, and the size $M$ of this subset satisfies
\[
M\geq b^{N}\]
 for some $1<b<2$. Let $\theta^{(0)}=(0,\dots,0)\in \RR^N$. We define a set of probability measures $\{P_0,\dots,P_M\}$ by setting $P_j=P_{\rho_j}$, where $\rho_j= \Psi(f_{\theta^{(j)}})$.

In order to show that the $P_j$ satisfy \eqref{bound_on_P} we present a technical lemma. In the classification setting the Kullback--Leibler divergence $K(\cdot,\cdot)$ satisfies
\[
K(P_\rho,P_\rho')=\sum\limits_{i=1}^n{\left(\rho(i) \log\frac{\rho(i)}{\rho'(i)}-(1-\rho(i))\log\frac{1-\rho(i)}{1-\rho'(i)}\right)}.
\]

\begin{lem}
\label{distances}
If $\frac{\Psi'}{\Psi(1-\Psi)}$ is bounded, then there exists $c>0$ such that for any $v_1, v_2 \in\RR^n$ we have
\[
K(P_{\Psi(v_1)},P_{\Psi(v_2)})\leq n c||v_1-v_2||^2_n. 
\]
\end{lem}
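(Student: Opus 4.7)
Since under any $P_\rho$ the coordinates $Y_1,\dots,Y_n$ are independent with $Y_i\sim\mathrm{Ber}(\rho(i))$, the Kullback--Leibler divergence tensorizes coordinate-wise:
\[
K(P_{\Psi(v_1)},P_{\Psi(v_2)})=\sum_{i=1}^{n}K\bigl(\mathrm{Ber}(\Psi(v_1(i))),\,\mathrm{Ber}(\Psi(v_2(i)))\bigr).
\]
Thus it suffices to prove the one-dimensional inequality
\[
K\bigl(\mathrm{Ber}(\Psi(a)),\,\mathrm{Ber}(\Psi(b))\bigr)\le c(a-b)^2\qquad\text{for all } a,b\in\RR,
\]
with $c$ depending only on the constants in the link-function hypothesis; summing over $i$ and recalling $\|v_1-v_2\|_n^2 = n^{-1}\sum_i(v_1(i)-v_2(i))^2$ then yields the lemma.

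For the scalar estimate, fix $a\in\RR$ and set $g(t)=K(\mathrm{Ber}(\Psi(a)),\mathrm{Ber}(\Psi(t)))$. Clearly $g(a)=0$, and a direct differentiation, after combining the two terms over a common denominator, gives
\[
g'(t)=\Psi'(t)\,\frac{\Psi(t)-\Psi(a)}{\Psi(t)(1-\Psi(t))}.
\]
Let $M$ be the uniform upper bound on $\Psi'/(\Psi(1-\Psi))$. Because $\Psi(1-\Psi)\le 1/4$, the same hypothesis forces $\|\Psi'\|_\infty\le M/4$, so by the mean value theorem $|\Psi(t)-\Psi(a)|\le(M/4)|t-a|$. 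Multiplying the two bounds yields $|g'(t)|\le (M^2/4)|t-a|$, and integrating from $a$ to $b$ (and using $g(a)=0$) gives $g(b)\le(M^2/8)(a-b)^2$, i.e.\ the desired scalar inequality with $c=M^2/8$.

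The only real subtlety lies in choosing how to handle the scalar KL. The naive chi-squared upper bound $K(\mathrm{Ber}(p),\mathrm{Ber}(q))\le(p-q)^2/[q(1-q)]$ combined with a mean value theorem on $\Psi$ would produce a factor $1/[\Psi(b)(1-\Psi(b))]$ that degenerates as $|b|\to\infty$ and so cannot be absorbed into a universal constant $c$. The derivative route above sidesteps this precisely because $g'$ is built out of the ratio $\Psi'/(\Psi(1-\Psi))$ that the hypothesis is designed to bound; everything else reduces to one-variable calculus.
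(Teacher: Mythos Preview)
Your proof is correct and follows essentially the same route as the paper: reduce to the scalar Bernoulli KL, differentiate in the second argument to obtain $g'(t)=\dfrac{\Psi'(t)}{\Psi(t)(1-\Psi(t))}\bigl(\Psi(t)-\Psi(a)\bigr)$, bound this using the hypothesis together with $\|\Psi'\|_\infty\le M/4$, and integrate. You are slightly more explicit than the paper about the tensorization step and the constant, and your closing remark on why the $\chi^2$ bound fails is a nice addition.
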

\begin{proof}
For every $x\in\RR$ consider the function $g_x:\RR \to \RR$ defined as
\[
g_x(y)=\Psi(x)\log\frac{\Psi(x)}{\Psi(y)}+(1-\Psi(x))\log\frac{1-\Psi(x)}{1-\Psi(y)}. 
\]
We see that $g'_x(y)=\frac{\Psi'(y)}{\Psi(y)(1-\Psi(y))}(\Psi(y)-\Psi(x)). $ Then by Taylor's theorem we can see that
\[
|g_x(y)|\leq\sup\limits_{v\in[x, y]\cup[y, x]}{\left|\frac{\Psi'(v)}{\Psi(v)(1-\Psi(v))}\right|} \sup\limits_{v\in[x, y]\cup[y, x]}{\left|\Psi'(v)\right|}(x-y)^2. 
\]
The statement of the lemma follows.
\end{proof} 
By Lemma \ref{distances}, we obtain for some $K_2>0$
\[
K(P_j,P_0)\leq K_2 n \|f_{\theta^{(j)}}- 0\|_n^2 =4K_2\delta^2 n N^{-2\beta/r},
\]
since
\begin{equation}
\label{eq: nnorm}
\|f_\theta-f_{\theta'}\|_n^2=4\delta^2N^{-(2\beta+r)/r}d_{h}(\theta,\theta').
\end{equation}
Observe that this bound does not depend on $j$. Hence,
\[
\frac{1}{M+1} \sum_{j=1}^M K(P_j, P_0)\leq K_2\delta^2 n N^{-2\beta/r}=K_2\delta^2\log M.
\]
We can choose $\delta>0$ to be small enough such that the condition \eqref{bound_on_P} is satisfied with some $0<\alpha<1$.

To finish the proof of the theorem we need to show that $\rho_0,\dots,\rho_M$ satisfy \eqref{eq: separation}. From \eqref{eq: hamming} and \eqref{eq: nnorm} we get
\[
 \|f_{\theta^{(i)}}-f_{\theta^{(j)}}\|_n\gge n^{-\beta/(2\beta+r)}.
\]
Moreover, by the assumption of the theorem we have for any $j=1,\dots,M$
\[
\max_{i=1,\dots,n} |f_{\theta^{(j)}}(i)| \lle N^{1-(2\beta+r)/2r}\max_{i=1,\dots,n}|\psi_j(i)|\lle N^{(r-2\beta)/2r}.
\]
For $\beta\geq r/2$ the norm is then bounded by some constant, which does not depend on $n$ or $j$. Hence, there exists $K_3\geq 0$ such that for every $i=1,\dots,n$ and every $j=1,\dots, M$
\[
|f_{\theta^{(j)}}(i)|\leq K_3.
\] 
Observe that since $\Psi'(x)\neq0$ for any $x\in\RR$, there exists $K_4>0$ such that for any $x, y\in[-K_3, K_3]$ 
\[
|\Psi(x)-\Psi(y)|\geq K_4|x-y|.
\]
Thus, for any pair $i,j\in\{1,\dots,M\}$ such that $i\neq j$
\[
\|\rho_i-\rho_j\|_n=\|\Psi(f_{\theta^{(i)}})-\Psi(f_{\theta^{(i)}})\|_n\gge n^{-\beta/(2\beta+r)}.
\]
This completes the proof of the theorem.

\newpage

\bibliographystyle{harry}
\bibliography{pathgraph}
\end{document}